\newcommand{\BEQA}{\begin{eqnarray}}
\newcommand{\EEQA}{\end{eqnarray}}
\def\e{\epsilon}
\def\a{\alpha}
\def\p{\hat p}
\def \foral {\textrm{for all }}
\def \pr  {\mathbf{P}}
\def \R  {\mathbb{R}}
\def \E  {\mathbf{E}}
\newtheorem{theorem}{Theorem}
\newtheorem{corollary}{Corollary}
\newtheorem{lemma}{Lemma}
\theoremstyle{definition}
\newtheorem{definition}{Definition}
\title{Access-Network Association Policies for Media Streaming in Heterogeneous Environments}
\author{Ali ParandehGheibi$^\dag$\thanks{$\dag$ Dept. of Electrical Engineering and Computer Science, MIT.}, Muriel M\'edard$^\dag$, Asuman Ozdaglar$^\dag$, Srinivas Shakkottai$^\ddag$ \thanks{$\ddag$ Dept. of Electrical and Computer Engineering, Texas A\&M University.}\\
\normalsize{parandeh@mit.edu, medard@mit.edu, asuman@mit.edu, sshakkot@tamu.edu}}
\begin{document}

\maketitle

\begin{abstract}

We study the design of media streaming applications in the presence of multiple heterogeneous wireless access methods with different throughputs and costs.  Our objective is to analytically characterize the trade-off between the usage cost and the Quality of user Experience (QoE), which is represented by the probability of interruption in media playback and the initial waiting time.
We model each access network as a server that provides packets to the user according to a Poisson process with a certain rate and cost.  Blocks are coded using random linear codes to alleviate the duplicate packet reception problem. Users must take decisions on how many packets to buffer before playout, and which networks to access during playout.

We design, analyze and compare several control policies with a threshold structure. We formulate the problem of finding the optimal control policy as an MDP with a probabilistic constraint. We present the HJB equation for this problem by expanding the state space, and exploit it as a verification method for optimality of the proposed control law.

\end{abstract}

\section{Introduction}\label{introduction_sec}

Media streaming is fast becoming the dominant application on the Internet \cite{Lab09}.  The popularity of such media transfers has been accompanied by the growing usage of wireless handheld devices as the preferred means of media access.  It is expected that such media streaming would happen in both a device to device (D2D) as well as in a base-station to device fashion, and both the hardware and applications needed for such communication schemes are already making an appearance \cite{Lar10,knocking}.

Media streaming is achieved by dividing a file into \emph{blocks,} which are then further divided into packets for transmission.  After each complete block is received, it can be decoded and played out.  Since we consider a streaming application, blocks inherently have a sequence associated with them, and each block must be received by the time the previous one has been played out.  The absence of a block at the time of playout would cause a frame freeze, which is to be avoided if possible.  When there are multiple networks that can be used to access a particular piece of content (e.g. from a base station or a peer device) each device must take decisions on associating with one or more such access networks.  However, the costs of different access methods might be different.  For example, accessing the base station of a cellular network can result in additional charges per packet, while it might be possible to receive the same packets from the access point of a local WLAN or another device with a lower cost or possibly for free.  Further, the cost of communication might be mitigated by the initial amount of buffering before playout.  Hence, there are  trade-offs between the probability of frame skipping, the initial waiting time, and the cost of different access methods available.

The objective of this paper is to understand the trade-off between initial buffering, and the usage of low and costly communication methods for attaining a target probability of skip-free playout.  We consider a system wherein network coding is used to ensure that packet identities can be ignored, and packets may potentially be obtained from two sources (servers) that have different rates of transmission.  The wireless channel is unreliable, and we assume that each server can deliver packets according to a Poisson process with a known rate. Further, the costs of accessing the two servers are different; for simplicity we assume that one of the servers is free.  Thus, \emph{our goal is to develop an algorithm that switches between the free and the costly servers in order to attain a target probability of skipping at lowest cost.}

Our contributions are as follows. We first develop an analytical characterization of the interruption probability for the single server case.  Using this result, we obtain a lower bound on the cost of offline policies that do not observe the trajectory of packets received.  We show that such policies have a threshold form in terms of the time of association with the costly server.  Using the offline algorithm as a starting point, we develop an \emph{online} algorithm with lower cost that has a threshold form -- both free and costly servers are used until the queue length reaches a threshold, followed by only free server usage.  We then develop an online algorithm in which the risk of interruption is spread out across the trajectory.  Here, only the free server is used whenever the queue length is above a certain threshold, while both servers are used when the queue length is below the threshold. The threshold is designed as a function of the initial buffer size and the desired interruption probability.

We formulate the problem of finding the optimal network association policy  as a Markov Decision Process with a probabilistic constraint. Similarly to the Bellman equation proposed by Chen \cite{Chen04} for a discrete time MDP with probabilistic constraints, we write the Hamilton-Jacobi-Bellman equation for the problem. Using a guess and check approach, we derive an approximate solution of the HJB equation, and show that the optimal policy given by the approximate value function takes a threshold form.

Media streaming, particularly in the area of P2P networks has attracted significant recent interest. For example, work such as  \cite{ZhoChi07,BonMas08,YinSri10} develop analytical models on the trade-off between the steady state probability of missing a block and buffer size under different block selection policies.  Unlike our model, they consider live streaming with deterministic channels.  The use of random linear codes considerably simplifies packet selection \cite{Acedanski05, Rodriguez06, wangLi07, ChiZhang06}, and we can use the same idea to ensure that packets can be received from multiple sources without the need to coordinate the exact identities of the packets from each.  However, we focus on content that is already cached at multiple locations, and must be streamed over one or more unreliable channels.  Related to our work is \cite{KumAlt07}, which considers two possible wireless access methods (WiFi and UMTS) for file delivery, assuming particular throughput models for each access method.  In contrast to this work, packet arrivals are stochastic in our model, and our streaming application requires hard constraints on quality of user experience.

\section{System Model and QoE Metrics}
We consider a media streaming system as follows. A single user is receiving a media file of size $F$ from various servers it is connected to. Each server could be a wireless access point or another wireless user operating as a server. The receiver first buffers $D$ packets from the beginning of the file, and then starts the playback.

We assume that time is continuous, and the arrival process of packets from each server is a Poisson process independent of other arrival processes. Further, we assume that no redundant packet is delivered from different servers. This assumption can be justified if there is no delay in the feedback to the servers, or by sending random linear combination of the packets in the server (see \cite{CheLi08} and \cite{ISIT_report} for more details). Therefore, we can combine the arrival processes of any subset $S$ of the servers into one Poisson process of rate $R_S$ equal to the summation of the rates from the corresponding servers.

 There are two types of servers in the system: free servers and the costly ones. There is no cost associated with receiving packets from a free server, but a unit cost is incurred for each (coded) packet delivered by any costly server. As described above, we can combine all the free servers into one free server from which packets arrive according to a Poisson process of rate $R_0$. Similarly, we can merge all of the costly servers into one costly server with effective rate of $R_c$. At any time $t$, the user has the option to use packets only from the free server or from both the free and the costly servers. In the latter case, the packets arrive according to a Poisson process of rate $R_1 = R_0 + R_c$.  The user's action at time $t$ is denoted by $u_t \in \{ 0,1 \}$, where $u_t =0$ if only the free server is used at time $t,$ while $u_t =1$ if both free and costly servers are used. We normalize the playback rate to one, i.e., it takes one unit of time to play a single packet.  We also assume that the parameters $R_0$ and $R_1$ are known at the receiver.

 The dynamics of the receiver's buffer size (queue-length) $x_t$ can be described as follows
\begin{equation}\label{buffer}
    x_t =  D + N_t + \int_0^t u_\tau dN^c_\tau  - t,
\end{equation}
where $D$ is the initial buffer size, $N_t$ Poisson processes of rate $R_0$ and $N^c_t$ is a Poisson counter of rate $R_c$ which is independent of the process $N_t$. The last term correspond to the unit rate of media playback.

The user's association (control) policy is formally defined below.
\begin{definition}\label{policy_def}
[Control Policy] Let $h_t = \{x_s: 0\leq s\ \leq t\} \cup \{u_s: 0\leq s\ < t\}$ denote the history of the buffer sizes and actions up to time $t$, and $\mathcal H$ be the set of all histories for all $t$. A \emph{deterministic association policy } denoted by $\pi$ is a mapping $\pi : \mathcal H \longmapsto \{ 0,1 \} $, where at any time $t$
$$\pi(h_t) = \left\{
             \begin{array}{ll}
               0, & \hbox{if only the free server is chosen,} \\
               1, & \hbox{if both servers are chosen.}
             \end{array}
           \right.
$$
Denote by $\Pi$ the set of all such control policies.
\end{definition}

We can declare an interruption in playback when the buffer size decreases to zero before reaching the end of the file, i.e., when there is no packet at the receiver to be played but the file is not completely downloaded. More precisely, let
\begin{eqnarray}\label{hitting_def}
\tau_e =\inf \{t: x_t \leq 0 \}, \quad
\tau_f = \inf \{t: x_t \geq F - t \},&&
\end{eqnarray}
where $\tau_f$ corresponds to time of completing the file download, because we have already played $\tau_f$ packets and the buffer contains the remaining $F-\tau_f$ packets to be played.  The video streaming is interrupted if and only if $\tau_e < \tau_f$.

We consider the following metrics to quantify Quality of user Experience (QoE). The first metric is the initial waiting time before the playback starts. This is directly captured by the initial buffer size $D$. Another metric that affects QoE is the probability of interruption during the playback for a particular control policy $\pi$ denoted by
\begin{equation}\label{p_int}
p^{\pi}(D) = \pr \{\tau_e < \tau_f\},
\end{equation}
 where $\tau_e$ and $\tau_f$ are defined in (\ref{hitting_def}).

\begin{definition}\label{feas_policy_def}
The policy $\pi$ is defined to be  $(D,\epsilon)$-\emph{feasible} if $p^{\pi}(D) \leq \epsilon$. The set of all such feasible policies is denoted by $\Pi(D,\epsilon)$.
\end{definition}

The third metric that we consider in this work is the expected cost of using the costly server which is proportional to the expected usage time of the costly server. For any $(D,\epsilon)$, the usage cost of a $(D,\epsilon)$-feasible policy $\pi$ is given by\footnote{Throughout this work, we use the convention that the cost of an infeasible policy is infinite.}
\begin{equation}\label{cost_def}
J^{\pi}(D,\e) = \E \Big[\int_0^F u_t dt\Big].
\end{equation}

The value function or optimal cost function $V$ is defined as
\begin{equation}\label{value_def}
    V(D, \e) = \min_{\pi \in \Pi(D,\epsilon)} J^{\pi}(D,\e),
\end{equation}
and the optimal policy $\pi^*$ is defined as the optimal solution of the minimization problem in (\ref{value_def}).

In our model, the user expects to have an interruption-free experience with probability higher than a desired level $1-\epsilon$. Note that there is a  fundamental trade-off  between the interruption probability $\epsilon$, the initial buffer size $D$, and the usage  cost. These trade-offs depend on the association policy as well as the system parameters $R_0$, $R_c$ and $F$.

We first characterize the trade-offs between the QoE metrics for degenerate control policies. Next, we use these results to design association policies.

\section{QoE Trade-offs for the Single-Server Problem}
Consider a single-server problem where the receiver receives the packets according to a Poisson process of rate $R$. The user's only decision in this case is the initial buffer size $D$. We would like to  characterize the optimal trade-off between the initial buffer size and the interruption probability $p(D)$ by providing bounds on the interruption probability as a function of the system parameters $R$ and $F$. An upper bound (achievability) on $p(D)$ is particularly useful, since it provides a sufficient condition for desirable user experience. A lower bound (converse) on $p(D)$ demonstrates how tight the upper bound is.

\begin{theorem}\label{pd_thm}
For the initial buffer size $D$, let $p(D)$ be the interruption probability of a single-server system defined as in (\ref{p_int}). Define $\gamma(r)$  as
\begin{equation}\label{gamma_def}
 \gamma(r) = r + R(e^{-r} -1 ),
\end{equation}
and $\bar r(R)$ as the largest root of $\gamma(r)$, i.e.,
\begin{equation}\label{r_bar}
    \bar r(R) = \sup\{r: \gamma(r) = 0 \}.
\end{equation}

Then for all $R>1$,
  \begin{equation}\label{De_to}
   e^{-\bar r(R) D} - 2e^{-\frac{(R-1)^2}{4(R+1)}F}  \leq p(D) \leq      e^{-\bar r(R) D} .
  \end{equation}
\end{theorem}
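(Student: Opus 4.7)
The plan is to build the exponential martingale $M_t = e^{-\bar r x_t} = e^{-\bar r(D + N_t - t)}$ and apply optional stopping at $\tau = \tau_e \wedge \tau_f$. The defining equation $\gamma(\bar r)=0$ is exactly the condition that makes $M_t$ a martingale: since $E[e^{-\bar r N_t}] = \exp(Rt(e^{-\bar r}-1))$, the expectation $E[M_t] = e^{-\bar r D} e^{\gamma(\bar r)t}$ is constant iff $\bar r$ is a root of $\gamma$. The stopping time $\tau$ is bounded by $F$ (on $\{\tau_e<\tau_f\}$ one has $\tau_e\leq F$ directly; on $\{\tau_f<\tau_e\}$ the buffer must be strictly positive up to $\tau_f$, which forces $\tau_f<F$), so optional stopping is legitimate. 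Because $x_{\tau_e}=0$ (the buffer decreases continuously between jumps) and $x_{\tau_f}=F-\tau_f$ by definition of $\tau_f$, this produces the identity
\begin{equation*}
e^{-\bar r D} \;=\; p(D) \;+\; E\bigl[e^{-\bar r(F-\tau_f)}\mathbf 1_{\tau_f<\tau_e}\bigr].
\end{equation*}
The upper bound $p(D)\leq e^{-\bar r D}$ follows by dropping the second, non-negative term.

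For the lower bound I need to upper bound the remainder by $2e^{-(R-1)^2 F/(4(R+1))}$. I will split on the value of $\tau_f$ using the threshold $\eta = (R-1)/(2R)$, chosen so that $(1-\eta)R = (R+1)/2$. On $\{\tau_f \leq (1-\eta)F\}$ the integrand is pointwise bounded by $e^{-\bar r \eta F}$. Here I will use $\bar r > R-1$, which follows from $\gamma(R-1) = -1 + Re^{-(R-1)}<0$ for $R>1$ together with the fact that $\gamma<0$ on $(0,\bar r)$; combined with the elementary inequality $R-1 \geq R(R-1)/(2(R+1))$, this gives $\bar r\eta \geq (R-1)^2/(4(R+1))$ and hence the claimed bound on this piece.

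On $\{\tau_f > (1-\eta)F\}$ the integrand is at most $1$ and the event equals $\{N_{(1-\eta)F}<F-D\}$, a lower-tail deviation for a Poisson of mean $(1-\eta)RF=(R+1)F/2>F\geq F-D$. A Chernoff estimate followed by the quadratic lower bound $1-x+x\ln x \geq (1-x)^2/2$ on $(0,1]$ (which follows from $h''(x)=1/x$ and a Taylor remainder) yields
\begin{equation*}
P(\tau_f > (1-\eta)F) \;\leq\; \exp\!\left(-\tfrac{((1-\eta)RF - (F-D))^2}{2(1-\eta)RF}\right),
\end{equation*}
and with $\eta=(R-1)/(2R)$ the numerator is at least $(F(R-1)/2)^2$ while the denominator equals $(R+1)F$, giving exactly $e^{-(R-1)^2 F/(4(R+1))}$ (any contribution from $D$ only strengthens the bound).

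The main obstacle is really a bookkeeping one: choosing the split point $\eta$ so that both the deterministic piece (controlled by $\bar r$) and the probabilistic piece (controlled by Chernoff) collapse to the same constant $(R-1)^2/(4(R+1))$. The value $\eta=(R-1)/(2R)$ makes the Poisson exponent simplify cleanly, and the a priori bound $\bar r > R-1$ — itself the only nontrivial fact one needs about the root of $\gamma$ — automatically takes care of the other half.
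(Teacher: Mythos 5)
Your proof is correct: the exponential martingale $e^{-\bar r(R)x_t}$ with optional stopping at $\tau_e\wedge\tau_f\leq F$, plus the split at $\tau_f\lessgtr\frac{R+1}{2R}F$ with the bound $\bar r(R)>R-1$ on one piece and the Poisson lower-tail Chernoff bound on the other, does yield exactly the constants $2$ and $\frac{(R-1)^2}{4(R+1)}$ in the statement. The paper defers its proof to the cited technical report, but this is essentially the same approach it relies on elsewhere (cf.\ the martingale $Y(t)=e^{-\bar r(R)x_t}$ and Doob's optional stopping theorem in Lemma \ref{stop_lemma}), so no further comparison is needed.
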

\begin{proof}
We do not include the proof owing to space limitations. See \cite{ISIT_report} for a complete proof.
\end{proof}

Note that the upper bounds and lower bounds of $p(D)$ given by Theorem \ref{pd_thm} are asymptotically tight as $F$ goes to infinity. Therefore, for $F = \infty$, by continuity of the probability measure we get
\begin{equation}\label{pd_exact}
 p(D) = \pr\Big(\min_{t\geq 0} x_t \leq 0 \big| x_0 = D\Big)= e^{-\bar r(R) D}.
\end{equation}

Using this characterization, we can identify the ranges of the QoE metrics for which there exists no feasible policy or the costly server is not required.
\begin{corollary}\label{boundary_cor}

(a) For any $(D,\epsilon)$ such that $D\geq \frac{1}{\bar r(R_0)}\log\big(\frac1\epsilon\big)$,
$$\min_{\pi \in \Pi} J^{\pi}(D,\e)  = 0.$$

(b) For any $(D,\epsilon)$ such that $D < \frac{1}{\bar r(R_1)}\log\big(\frac1\epsilon\big)$,
$$\min_{\pi \in \Pi} J^{\pi}(D,\e) = \infty.$$

\end{corollary}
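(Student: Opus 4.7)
The plan is to prove each part by exhibiting (or ruling out) a single simple candidate policy and invoking the closed-form characterization (\ref{pd_exact}) of the single-server interruption probability.

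For part (a), I would consider the constant policy $\pi_0$ defined by $u_t \equiv 0$, under which the user never accesses the costly server. The buffer dynamics (\ref{buffer}) then reduce to those of a single-server system with arrival rate $R_0$, so by (\ref{pd_exact}) the interruption probability is $p^{\pi_0}(D) = e^{-\bar r(R_0) D}$. The hypothesis $D \geq \frac{1}{\bar r(R_0)} \log\bigl(\frac{1}{\epsilon}\bigr)$ gives $e^{-\bar r(R_0) D} \leq \epsilon$, so $\pi_0 \in \Pi(D,\epsilon)$. Since $J^{\pi_0}(D,\e) = \E\bigl[\int_0^F 0\, dt\bigr] = 0$ and $J^{\pi} \geq 0$ for every $\pi$, the infimum in (\ref{value_def}) is attained at $\pi_0$ and equals zero.

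For part (b), I would first establish a pathwise monotonicity lemma: among all admissible policies, the constant policy $\pi_1$ with $u_t \equiv 1$ minimizes the interruption probability. Coupling every policy to the same realization of the independent Poisson counters $N_t, N_t^c$, the dynamics (\ref{buffer}) yield $x_t^{\pi} = x_t^{\pi_1} - \int_0^t (1 - u_\tau)\, dN^c_\tau \leq x_t^{\pi_1}$ sample-path by sample-path. Consequently $\tau_e^{\pi_1} \geq \tau_e^{\pi}$ and $\tau_f^{\pi_1} \leq \tau_f^{\pi}$, so the event inclusion $\{\tau_e^{\pi_1} < \tau_f^{\pi_1}\} \subseteq \{\tau_e^{\pi} < \tau_f^{\pi}\}$ holds, giving $p^{\pi_1}(D) \leq p^{\pi}(D)$. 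Under $\pi_1$ the system is again a single-server problem, now at rate $R_1$, so (\ref{pd_exact}) gives $p^{\pi_1}(D) = e^{-\bar r(R_1) D}$. The hypothesis $D < \frac{1}{\bar r(R_1)} \log\bigl(\frac{1}{\epsilon}\bigr)$ then implies $p^{\pi}(D) \geq e^{-\bar r(R_1) D} > \epsilon$ for every $\pi$, whence $\Pi(D,\epsilon) = \emptyset$. By the infeasibility convention stated in the footnote to (\ref{cost_def}), the value in (\ref{value_def}) is $\infty$.

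The main obstacle is the monotonicity step: it requires constructing a common probability space on which $N_t$ and $N_t^c$ are fixed and all policies are evaluated against the same sample path, so that the algebraic identity above genuinely yields a pathwise (not merely stochastic) ordering. Once this coupling is in place, the rest is bookkeeping. A minor subtlety is that (\ref{pd_exact}) is the $F = \infty$ form of Theorem \ref{pd_thm}; for finite but large $F$ the argument goes through by substituting the two-sided bounds (\ref{De_to}) and noting that the strict inequality in the hypothesis of (b) leaves slack for the exponentially small error term, which is the regime implicitly targeted by the remark following Theorem \ref{pd_thm}.
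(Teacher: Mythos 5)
Your proposal is correct and follows essentially the same route as the paper: part (a) uses the free-only policy $\pi_0\equiv 0$ with (\ref{pd_exact}), and part (b) rules out feasibility by comparing every policy to $\pi_1\equiv 1$ and invoking the infinite-cost convention. The only difference is that you make the paper's one-line stochastic-dominance claim explicit via a pathwise coupling of the Poisson counters (and note the finite-$F$ caveat), which is a welcome but not structurally different refinement.
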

\begin{proof}
Consider the degenerate policy $\pi_0 \equiv 0$. This policy is equivalent to a single-server system with arrival rate $R = R_0$. By Definition \ref{feas_policy_def}, and (\ref{pd_exact}), the policy $\pi_0$ is $(D,\epsilon)$-feasible for all $D\geq \frac{1}{\bar r(R_0)}\log\big(\frac1\epsilon\big)$. Note that by (\ref{cost_def}) this policy does not incur any cost, which results in part (a).

Moreover, for all $(D,\epsilon)$ with $D < \frac{1}{\bar r(R_1)}\log\big(\frac1\epsilon\big)$, there is no $(D,\epsilon)$-feasible policy. This is so since the buffer size under any policy $\pi$ is stochastically dominated by the one governed by the degenerate policy  $\pi_1 \equiv 1$. Hence,
$$p^{\pi}(D) \geq p^{\pi_1}(D) = \exp(-\bar r(R_1) D) > \epsilon.$$
Using the convention of infinite cost for infeasible policies, we obtain the result in part (b).
\end{proof}

For simplicity of notation, let $\alpha_0 = \bar r(R_0)$, and $\alpha_1 = \bar r(R_1)$.  Throughout the rest of this paper, we study the case that the file size $F$ is infinite, since the control policies in this case take simpler forms and the cost of such control policies provide an upper bound for the finite file size case. Further, by Corollary \ref{boundary_cor} we focus on the region
\begin{equation}\label{S_def}
\mathcal R = \Big\{(D,\epsilon): \frac{1}{\alpha_1}\log\big(\frac1\epsilon\big) \leq D \leq \frac{1}{\alpha_0}\log\big(\frac1\epsilon\big)\Big\}
\end{equation}
to analyze the expected cost of various classes of control policies.

\section{Design and Analysis of  Association Policies}
In this section, we propose several classes of parameterized control policies. We first characterize the range of the parameters for which the association policy is feasible for a given initial buffer size $D$ and the desired level of interruption probability $\epsilon$. Then, we try to choose the parameters such that the expected cost of the policy is minimized.

\subsection{Off-line Policy}
Consider the class of policies where the decisions are made off-line before starting media streaming. In this case, the arrival process is not observable by the decision maker. Therefore, the user's decision space reduces to the set of deterministic functions $u: \R \rightarrow \{0,1\}$, that maps time into the action space.

\begin{theorem}\label{offline_form_thm}
Let the cost of a control policy be defined as in (\ref{cost_def}). In order to find a minimum-cost off-line policy, it is sufficient to consider policies of the following form:
\begin{equation}\label{offline_form}
    \pi(h_t) = u_t = \left\{
            \begin{array}{ll}
              1, & \hbox{if $t \leq t_s$} \\
              0, & \hbox{if $t > t_s.$}
            \end{array}
          \right.
\end{equation}
\end{theorem}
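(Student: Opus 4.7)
The plan is to use a rearrangement argument: front-loading all costly-server usage to the earliest part of the horizon weakly decreases the interruption probability at exactly the same cost. Given any offline deterministic $u : \R \to \{0,1\}$ with total usage $c = \int_0^\infty u_\tau\,d\tau$, I would compare it to the candidate threshold policy $\tilde u_t = \mathbf{1}\{t \leq c\}$. Since $\int_0^\infty \tilde u_t\,dt = c$, the two policies incur the same cost by (\ref{cost_def}), and the theorem reduces to showing $p^{\tilde u}(D) \leq p^u(D)$.

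The key step is a pathwise coupling of the two buffer processes on a common probability space. I would take $\{N_t\}$ to be a single Poisson process of rate $R_0$ driving the free-server arrivals, and introduce an independent unit-rate Poisson process $\{\hat N_\tau\}_{\tau \geq 0}$. Setting $\Lambda(t) = R_c \int_0^t u_\tau\,d\tau$ and $\tilde\Lambda(t) = R_c \min(t, c)$, I define the costly-server counting processes by time change,
$$M_t = \hat N(\Lambda(t)), \qquad \tilde M_t = \hat N(\tilde \Lambda(t)).$$
By the time-change property, $M_t$ and $\tilde M_t$ are marginally inhomogeneous Poisson processes with intensities $R_c u(t)$ and $R_c \tilde u(t)$, reproducing the correct joint law of the arrival process required by (\ref{buffer}) under each policy. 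Because $u_\tau \in \{0,1\}$, one has $\int_0^t u_\tau\,d\tau \leq \min(t, c)$, i.e.\ $\Lambda(t) \leq \tilde \Lambda(t)$ for every $t$; monotonicity of $\hat N$ then yields $M_t \leq \tilde M_t$ almost surely, for all $t$ simultaneously.

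Plugging this coupling into the dynamics (\ref{buffer}) with the shared $N_t$ gives $x_t^u = D + N_t + M_t - t \leq D + N_t + \tilde M_t - t = x_t^{\tilde u}$ pointwise, hence $\inf_t x_t^u \leq \inf_t x_t^{\tilde u}$ and
$$p^u(D) = \pr\!\left(\inf_t x_t^u \leq 0\right) \geq \pr\!\left(\inf_t x_t^{\tilde u} \leq 0\right) = p^{\tilde u}(D),$$
which is the desired comparison. This shows that every offline policy is weakly dominated in both cost and interruption probability by a threshold policy of the form (\ref{offline_form}) with $t_s = c$, completing the reduction. The main obstacle is producing a coupling that simultaneously preserves the correct marginal law of each costly-arrival process and enforces the pathwise inequality $M_t \leq \tilde M_t$; driving both processes through a single unit-rate Poisson clock via a time change accomplishes this cleanly. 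The degenerate cases $c = 0$ and $c = \infty$ correspond directly to $t_s = 0$ and $t_s = \infty$ and require no separate treatment.
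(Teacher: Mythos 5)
Your proposal is correct and follows essentially the same route as the paper: replace an arbitrary offline policy by the threshold policy with $t_s$ equal to its total usage time (so costs coincide), and show the front-loaded policy's buffer process stochastically dominates the original, hence has no larger interruption probability. The only difference is that the paper merely asserts this dominance ("the policy counts the costly arrivals earlier"), whereas you substantiate it with an explicit time-change coupling of a single unit-rate Poisson clock, using $\int_0^t u_\tau\,d\tau \leq \min(t,c)$ to get $M_t \leq \tilde M_t$ pathwise---a rigorous filling-in of the paper's sketch rather than a different argument.
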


\begin{proof}
In general any off-line policy $\pi$ consists of multiple intervals in which the costly server is used. Consider an alternative policy $\pi'$  of the form of (\ref{offline_form}) where $t_s = J^{\pi}$. By definition of the cost function in (\ref{cost_def}) the two policies incur the same cost. Moreover, the buffer size process under policy $\pi$ is stochastically dominated by the one under policy $\pi'$, because the policy $\pi'$ counts the arrivals from the costly server earlier, and the arrival process is stationary. Hence, the interruption probability of $\pi'$ is not larger than that of $\pi$. Therefore, for any off-line policy, there exists another off-line policy of the form given by (\ref{offline_form}).
\end{proof}

\begin{theorem}\label{offline_range_thm}
Consider the class of off-lines policies of the form (\ref{offline_form}). For any $(D,\e) \in \mathcal R$, the policy $\pi$ defined in (\ref{offline_form}) is feasible if
\begin{equation}\label{ts_range}
    t_s \geq t_s^* = \frac{R_0}{R_1 - R_0} \bigg[ \frac{1}{\alpha_0} \log\Big( \frac{1}{\e - e^{-\alpha_1 D}}  \Big) - D \bigg].
\end{equation}
\end{theorem}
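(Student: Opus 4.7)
My plan is to split the interruption event into two disjoint pieces based on whether the buffer first hits zero during the costly-server phase $[0,t_s]$ or during the free-only phase $(t_s,\infty)$, bound each piece separately, and then invert the resulting bound in $t_s$. Concretely, I would write
\begin{equation*}
p^{\pi}(D) \;=\; \pr(\tau_e \leq t_s) \;+\; \pr(\tau_e > t_s,\,\tau_e < \infty),
\end{equation*}
so everything reduces to bounding the two terms.

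For the first term, during $[0,t_s]$ the buffer evolves exactly like a single-server queue of rate $R_1$ with initial level $D$, so by formula (\ref{pd_exact}) applied with $R=R_1$ and then restricted to a subset of the sample space, I get $\pr(\tau_e \leq t_s) \leq e^{-\alpha_1 D}$. For the second term, I would condition on $\mathcal{F}_{t_s}$ and use the strong Markov property: on the event $\{\tau_e > t_s\}$, from time $t_s$ onward the buffer is a single-server process of rate $R_0$ starting from $x_{t_s}>0$, and (\ref{pd_exact}) with $R=R_0$ gives the conditional probability of ever hitting zero as $e^{-\alpha_0 x_{t_s}}$. Dropping the surviving-indicator for an upper bound,
\begin{equation*}
\pr(\tau_e > t_s,\,\tau_e < \infty) \;=\; \E\!\left[e^{-\alpha_0 x_{t_s}}\mathbf{1}_{\tau_e>t_s}\right] \;\leq\; \E\!\left[e^{-\alpha_0 x_{t_s}}\right].
\end{equation*}

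The computation of $\E[e^{-\alpha_0 x_{t_s}}]$ is routine: $x_{t_s} = D + N^1_{t_s} - t_s$ with $N^1_{t_s}$ Poisson of mean $R_1 t_s$, so the Poisson moment generating function gives $\E[e^{-\alpha_0 x_{t_s}}] = \exp\!\big(-\alpha_0 D + t_s[\alpha_0 + R_1(e^{-\alpha_0}-1)]\big)$. The key algebraic simplification uses the defining identity $\gamma(\alpha_0)=0$, i.e., $\alpha_0 = R_0(1-e^{-\alpha_0})$, which converts the bracket into $-(R_1-R_0)\alpha_0/R_0$. This yields
\begin{equation*}
p^{\pi}(D) \;\leq\; e^{-\alpha_1 D} \;+\; e^{-\alpha_0 D}\exp\!\Big(-\tfrac{(R_1-R_0)\alpha_0}{R_0}\,t_s\Big).
\end{equation*}

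Finally, I would impose $p^{\pi}(D)\leq \epsilon$, move $e^{-\alpha_1 D}$ to the right-hand side (the assumption $(D,\epsilon)\in\mathcal{R}$ guarantees this is nonnegative so the logarithm is defined), and solve for $t_s$; the resulting threshold is precisely $t_s^*$ in (\ref{ts_range}). The step I expect to require the most care is the second one: justifying the conditional application of (\ref{pd_exact}) at time $t_s$ via the strong Markov property of $x_t$ under the switched dynamics, and verifying that discarding the indicator $\mathbf{1}_{\tau_e>t_s}$ produces a bound that is still tight enough to recover the claimed threshold — as the calculation above shows, it is, because the slack in phase~1 is already accounted for by the $e^{-\alpha_1 D}$ term that subtracts off inside the logarithm.
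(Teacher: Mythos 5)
Your proposal is correct and follows essentially the same route as the paper's proof: bound the interruption probability over $[0,t_s]$ by $e^{-\alpha_1 D}$ via Theorem \ref{pd_thm} with rate $R_1$, bound the post-$t_s$ term by $\E[e^{-\alpha_0 x_{t_s}}]$ computed from the Poisson distribution of $N_{t_s}+N^c_{t_s}$ together with the identity $\gamma(\alpha_0)=0$, and solve the resulting inequality for $t_s$. The only cosmetic differences are your disjoint decomposition of the interruption event (the paper uses a union bound) and your use of the Poisson moment generating function where the paper writes out the explicit Poisson sum.
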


\begin{proof}
By Definition \ref{feas_policy_def}, we need to show that $p^\pi(D) \leq \e$. By a union bound on the interruption probability, it is sufficient to verify
\begin{equation}\label{union_bd}
\pr\Big(\min_{0\leq t\leq t_s} x_t \leq 0 \big| x_0 = D\Big) + \pr\Big(\min_{t > t_s} x_t \leq 0 \big| x_0 = D\Big) \leq \e.
\end{equation}

In the interval $[0,t_s]$,  $x_t$ behaves as in a single-server system with rate $R_1$. Hence, by Theorem \ref{pd_thm} we get
\begin{equation}\label{pint_bd1}
\pr\Big(\min_{0\leq t\leq t_s} x_t \leq 0 \big| x_0 = D\Big) \leq e^{-\alpha_1 D}.
\end{equation}

For the second term in (\ref{union_bd}), we have

\begin{eqnarray*}
&&  \pr\Big(\min_{t > t_s} x_t \leq 0 \big| x_0 = D\Big)  \\
&&  = \sum_{q = D-t_s}^\infty \pr\Big(\min_{t > t_s} x_t \leq 0 \big| x_{t_s} = q\Big) \pr(x_{t_s} = q) \\
&&  \stackrel{(a)}{\leq} \sum_{q = D-t_s}^\infty e^{-\a_0 q} \pr(x_{t_s} = q) \\
&&  = \sum_{k = 0}^\infty e^{-\a_0 (D+k-{t_s})} \pr(N_{t_s} + N^c_{t_s} = k) \\
&&  \stackrel{(b)}{=} \sum_{k = 0}^\infty e^{-\a_0 (D+k-{t_s})} \frac{e^{-R_1 {t_s}}(R_1 {t_s})^k}{k!} \\
&&  = e^{-\a_0 (D-{t_s})+R_1 {t_s} (e^{-\a_0}-1)} \sum_{k = 0}^\infty \frac{e^{-R_1 {t_s} e^{-\a_0}}(R_1 {t_s} e^{-\a_0})^k}{k!}\\
&& = \exp\Big(-\a_0 (D-{t_s})+R_1 {t_s} (e^{-\a_0}-1)\Big)\cdot 1\\
&& \stackrel{(c)}{=} \exp\Big(-\a_0 (D-{t_s})+R_1 {t_s} (-\frac{\a_0}{R_0})\Big)\\
&& \stackrel{(d)}{\leq} \e - e^{-\alpha_1 D},
\end{eqnarray*}
where (a) follows from Theorem \ref{pd_thm} and the fact that $u_t = 0$, for $t \geq t_s$. (b) is true because $N_{t_s} + N^c_{t_s}$ is a Poisson random variable with mean $R_1 t_s$. (c) holds since $\a_0$ is the root of $\gamma(r)$ defined in (\ref{gamma_def}) for $R = R_0$. Finally, (d) follows from the hypothesis of the theorem.

By combining the above bounds, we may verify (\ref{union_bd}) which in turns proves feasibility of the proposed control policy.
\end{proof}

Note that obtaining the optimal off-line policy is equivalent to finding the smallest $t_s$ for which the policy is still feasible. Therefore, $t_s^*$ given in (\ref{ts_range}) provides an upper bound on the minimum cost of an off-line policy. Observe that $t^*_s$ is almost linear in $D$ for all $(D,\e)$ that is not too close to the lower boundary of region $\mathcal R$. As $(D,\e)$ gets closer to the boundary, $t^*_s$ and the expected cost grows to infinity, which is in agreement with Corollary \ref{boundary_cor}. In this work we pick $t^*_s$ as a benchmark for comparison to other policies that we  present next.

\subsection{Online Safe Policy}
Let us now consider the class of online policies where the decision maker can observe the buffer size history. Inspired by the structure of the optimal off-line policies, we first focus on a \emph{safe} control policy in which in order to avoid interruptions, the costly server is used  at the beginning  until the buffer size reaches a certain threshold after which the costly server is never used. This policy is formally defined below.

\begin{definition}\label{safe_def}
The online safe policy $\pi^S$ parameterized by the threshold value $S$ is given by
\begin{equation}\label{safe_policy}
    \pi^S(h_t) = \left\{
                   \begin{array}{ll}
                     1, & \hbox{if $t \leq \tau_s$} \\
                     0, & \hbox{if $t > \tau_s$,}
                   \end{array}
                 \right.
\end{equation}
where $\tau_S = \inf \{t \geq 0: x_t \geq S\}$.
\end{definition}

\begin{theorem}\label{safe_thm}
Let $\pi^S$ be the safe policy defined in Definition \ref{safe_def}.
For any $(D,\e) \in \mathcal R$, the safe policy is feasible if
\begin{equation}\label{S_range}
    S \geq S^* = \frac{1}{\alpha_0} \log\Big( \frac{1}{\e - e^{-\alpha_1 D}}  \Big).
\end{equation}
Moreover,
\begin{eqnarray*}\label{safe_cost}
    \min_{S\geq S^*} J^{\pi^S}(D,\e)\!\!\!\! &=& \!\!\!\! J^{\pi^{S^*}}(D,\e) \\
   &=& \!\!\!\! \frac{1}{R_1 - 1} \bigg[ \frac{1}{\alpha_0} \log\Big( \frac{1}{\e - e^{-\alpha_1 D}}  \Big) - D +\xi\bigg],
\end{eqnarray*}
where $\xi \in [0,1)$.
\end{theorem}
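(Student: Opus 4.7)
The plan is to prove the feasibility condition and the cost formula separately, mirroring the two-phase structure used in Theorem~\ref{offline_range_thm}.

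For feasibility, I would apply a union bound analogous to~(\ref{union_bd}), splitting the interruption event across the phases $\{t \le \tau_S\}$ and $\{t > \tau_S\}$. On the first phase both servers are active, so the buffer behaves as a single-server rate-$R_1$ system starting from $D$, and Theorem~\ref{pd_thm} bounds $\pr(\min_{0 \le t \le \tau_S} x_t \le 0)$ by $e^{-\alpha_1 D}$. At $\tau_S$ the queue satisfies $x_{\tau_S} \ge S$ (in fact $x_{\tau_S} \in [S, S+1)$, since $x_t$ only jumps up by $1$). For $t > \tau_S$, the system reduces to a single-server rate-$R_0$ system initialized at $x_{\tau_S} \ge S$, so by the strong Markov property at $\tau_S$ and Theorem~\ref{pd_thm}, the conditional probability of later buffer emptying is at most $e^{-\alpha_0 S}$. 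Summing, $p^{\pi^S}(D) \le e^{-\alpha_1 D} + e^{-\alpha_0 S}$, which is at most $\epsilon$ exactly when $S \ge S^*$.

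For the cost, observe that $J^{\pi^S}(D,\epsilon) = \E\bigl[\int_0^F u_t\, dt\bigr] = \E[\tau_S]$ by definition of the safe policy. While $t \le \tau_S$, the buffer satisfies $x_t = D + M_t - t$ for a Poisson process $M_t$ of rate $R_1$. Setting $\xi := \E[x_{\tau_S}] - S \in [0,1)$ (using the overshoot bound above) and applying the optional stopping theorem to the martingale $M_t - R_1 t$, which yields Wald's identity $\E[M_{\tau_S}] = R_1\, \E[\tau_S]$, I obtain
\begin{equation*}
\E[\tau_S] \;=\; \frac{\E[x_{\tau_S}] - D}{R_1 - 1} \;=\; \frac{S - D + \xi}{R_1 - 1}.
\end{equation*}
The pre-$\tau_S$ dynamics have the same law for every choice of $S$, and $\tau_S$ is pathwise nondecreasing in $S$, so $\E[\tau_S]$ is nondecreasing in $S$; hence the minimum over feasible thresholds $S \ge S^*$ is attained at $S = S^*$, yielding the claimed formula.

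The main technical obstacle is justifying the optional stopping step, which requires $\E[\tau_S] < \infty$. Since the drift $R_1 - 1 > 0$ in the relevant parameter regime ($R_1 > R_0 > 1$, as $\bar r(R)$ is only defined for $R > 1$), the hitting time $\tau_S$ has exponentially decaying tails; a standard truncation argument followed by monotone convergence makes the interchange rigorous. A secondary check is the strong Markov property at $\tau_S$ used in the feasibility step, which follows from standard results on Poisson-driven piecewise-deterministic Markov processes.
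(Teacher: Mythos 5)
Your proposal is correct and follows essentially the same route as the paper's proof: a two-phase split at $\tau_S$ with the Theorem~\ref{pd_thm} bounds $e^{-\alpha_1 D}$ and $e^{-\alpha_0 S}$ for feasibility, and Wald's identity / optional stopping with the unit-overshoot bound $\xi\in[0,1)$ plus monotonicity of $\tau_S$ in $S$ for the cost. The only difference is that you spell out the integrability justification for optional stopping, which the paper leaves implicit.
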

\begin{proof}
Similar to the proof of Theorem \ref{offline_range_thm}, we need to show that the total probability of interruption before and after crossing the threshold $S$ is bounded from above by $\e$. Observe that for any realization of $\tau_S$ the bound in (\ref{pint_bd1}) still holds. Further, since the costly server is not used after crossing the threshold and $x_{\tau_S} \geq S$, Theorem \ref{pd_thm} implies
\begin{eqnarray}\label{pint_bd2}
\pr\Big(\min_{t > \tau_S} x_t \leq 0 \big| x_0 = D\Big)   \leq  e^{-\alpha_0 S} \leq \e -  e^{-\alpha_1 D},&&
\end{eqnarray}
where the second inequality follows from (\ref{S_range}). Finally, combining (\ref{pint_bd1}) and (\ref{pint_bd2}) gives $p^{\pi^S}(D) \leq \e$, which is the desired feasibility result.

For the second part, first observe that $J^{\pi^{S}}(D,\e) = \E[\tau_S]$. In order to cross a threshold $S \geq S^*$, the threshold $S^*$ must be crossed earlier, because $x_0 = D \leq S^*$. Hence, $\tau_S$ stochastically dominates $\tau_S^*$, implying
$$J^{\pi^{S}}(D,\e) = \E[\tau_S] \geq \E[\tau_{S^*}] =J^{\pi^{S^*}}(D,\e), \quad \foral S\geq S^*.$$

It only remains to compute $\E[\tau_{S^*}]$. It follows from Wald's identity or Doob's optional stopping theorem \cite{SP_book} that
\begin{equation}\label{Walds_stopping}
D + (R_1 - 1)\E[\tau_{S^*}] = \E[x_{\tau_{S^*}}] = S^* + \xi,
\end{equation}

where $\xi \in [0,1)$ because the jumps of a Poisson process are of units size, and hence the overshoot size when crossing a threshold is bounded by one, i.e.,  $S^* \leq x_{\tau_{S^*}} < S^*+1$. Rearranging the terms in (\ref{Walds_stopping}) and plugging the value of $S^*$ from (\ref{S_range}) immediately gives the result.
\end{proof}

Let us now compare the online safe policy $\pi^{S^*}$ with the off-line policy defined in (\ref{offline_form}) with parameter $t_s^*$ as in (\ref{ts_range}). We observe that the cost of the online safe policy is almost proportional to that of the off-line policy, where the cost ratio of the off-line policy to that of the online safe policy is given by
$$\frac{R_0(R_1-1)}{R_1-R_0} = 1 + \frac{R_1(R_0-1)}{R_1-R_0} > 1.$$
Note that the structure of both policies is the same, i.e, both policies use the costly server for a certain period of time and then switch back to the free server. As suggested here, the advantage of observing the buffer size allows the online policies to avoid excessive use of the costly server when there are sufficiently large number of arrivals from the free server. In the following, we present another class of online policies.

\subsection{Online Risky Policy}
In this part, we study a class of online policies where the costly server is used only if the buffer size is below a certain threshold. We call such policies ``risky'' as the risk of interruption is spread out across the whole trajectory unlike the ``safe'' policies.  Further, we constrain risky policies to possess the property that the action at a particular time should only depend on the buffer size at that time, i.e., such policies are \emph{stationary Markov} with respect to buffer size as the state of the system. The risky policy is formally defined below.

\begin{definition}\label{risky_def}
The online risky policy $\pi^T$ parameterized by the threshold value $T$ is given by
\begin{equation}\label{risky_policy}
    \pi^T(h_t) = \pi^T(x_t)= \left\{
                   \begin{array}{ll}
                     1, & \hbox{if $0< x_t  <  T$} \\
                     0, & \hbox{otherwise.}
                   \end{array}
                 \right.
\end{equation}
\end{definition}

\begin{lemma}\label{stop_lemma}
Let $x_t$ be the buffer size of a single-server system with arrival rate $R > 1$. Let the initial buffer size be $D$ and for any $T\geq D > 0$ define the following stopping times
\begin{equation}\label{stop_def}
\tau_T = \inf \{t > 0: x_t \geq T\},\quad  \tau_e = \inf \{t \geq 0: x_t \leq 0\}.
\end{equation}
Then
\begin{equation}\label{stop_prob}
\pr(\tau_e > \tau_T) = \frac{1-e^{-\bar r(R) D}}{1-\E[e^{-\bar r(R) x_{\tau_T}}|\tau_e > \tau_T]},
\end{equation}
where $\bar r(R)$ is defined in (\ref{r_bar}).
\end{lemma}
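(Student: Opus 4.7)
The natural approach is to build an exponential martingale out of the buffer process and apply the optional stopping theorem at the random time $\tau := \tau_T \wedge \tau_e$. The candidate martingale is $M_t := e^{-\bar r(R) x_t}$. To verify the martingale property I would compute the infinitesimal generator of $x_t$ applied to $f(x) = e^{-\bar r(R) x}$: since $x_t$ decreases continuously at unit rate between jumps and jumps up by one at Poisson rate $R$, a direct calculation gives
\begin{equation*}
\mathcal{L} f(x) = -f'(x) + R\bigl[f(x+1) - f(x)\bigr] = e^{-\bar r(R) x}\bigl[\bar r(R) + R(e^{-\bar r(R)} - 1)\bigr] = e^{-\bar r(R) x}\,\gamma(\bar r(R)),
\end{equation*}
which vanishes because $\bar r(R)$ is a root of $\gamma$ by (\ref{r_bar}). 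Hence $M_t$ is a martingale with $M_0 = e^{-\bar r(R) D}$.

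Next I would apply optional stopping at $\tau$. Because $R > 1$ the process has positive drift $R - 1 > 0$, so on $\{\tau_e = \infty\}$ one has $x_t \to \infty$ and therefore $\tau_T < \infty$ on that event; this gives $\tau < \infty$ almost surely. Before time $\tau$ the process lies in $(0, T)$, and at the stopping instant $x_\tau$ lies in $\{0\} \cup [T, T+1)$, so the stopped martingale $M_{t \wedge \tau}$ is uniformly bounded and bounded convergence yields $\E[M_\tau] = M_0 = e^{-\bar r(R) D}$.

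It then remains to evaluate $M_\tau$ on each of the events $\{\tau_e < \tau_T\}$ and $\{\tau_e > \tau_T\}$. The crux is that on the former event $x_{\tau_e} = 0$ exactly, because $x_t$ has only upward jumps of size one and a continuous downward component, so the level zero can only be reached through the continuous part rather than by a strict undershoot; hence $M_\tau = 1$ on that event. On the complementary event $M_\tau = e^{-\bar r(R) x_{\tau_T}}$ by definition. Since $\pr(\tau_e = \tau_T) = 0$, setting $p = \pr(\tau_e > \tau_T)$ and splitting the expectation gives
\begin{equation*}
e^{-\bar r(R) D} = (1-p) + p\,\E\bigl[e^{-\bar r(R) x_{\tau_T}} \,\big|\, \tau_e > \tau_T\bigr],
\end{equation*}
from which solving for $p$ produces the claimed identity.

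The step I expect to be the main obstacle is the pathwise identification $x_{\tau_e} = 0$: one must argue that the structure of $x_t = D + N_t - t$, with unit upward jumps and continuous negative drift, forces the first hitting time of $(-\infty, 0]$ to coincide with continuous passage through $0$ rather than with a jump. Once this observation is justified the rest reduces to routine martingale bookkeeping; the other technical items (verifying $\gamma(\bar r(R)) = 0$ drives the drift to zero, confirming $\tau < \infty$ a.s., and invoking bounded optional stopping on a bounded stopped process) are standard.
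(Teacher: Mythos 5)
Your proposal is correct and follows essentially the same route as the paper: the exponential martingale $e^{-\bar r(R) x_t}$, finiteness of $\tau = \tau_T \wedge \tau_e$ from the positive drift, Doob's optional stopping theorem, and splitting the expectation on $\{\tau_e < \tau_T\}$ versus $\{\tau_e > \tau_T\}$. You supply somewhat more detail than the paper (the generator computation, boundedness of the stopped process, and the observation that $x_{\tau_e}=0$ exactly because the process only jumps upward), but these are refinements of the identical argument.
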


\begin{proof}
Let $Y(t) = e^{-\bar r(R) x_t}$. We may verify that $Y(t)$ is a martingale and uniformly integrable. Also, define the stopping time $\tau = \min\{\tau_T, \tau_e\}$. Since $R>1$, we have
$\pr(\tau \geq t) \leq \pr(0 < x_t < T) \rightarrow 0$, as $t\rightarrow \infty$. Hence, $\tau < \infty$ almost surely. Therefore, we can employ Doob's optional stopping theorem \cite{SP_book} to write
\begin{eqnarray*}
e^{-\bar r(R) D} &=& \E[Y(0)] = \E[Y(\tau)] \\
&=& \pr(\tau_e \leq \tau_T) \cdot 1 \\
&&+ \pr(\tau_e > \tau_T) \E[e^{-\bar r(R) x_{\tau_T}} |\tau_e > \tau_T].
\end{eqnarray*}
The claim immediately follows from the above relation after rearranging the terms.

\end{proof}

\begin{theorem}\label{risky_thm}
Let $\pi^T$ be the risky policy defined in Definition \ref{risky_def}.
For any $(D,\e) \in \mathcal R$, the policy $\pi^T$ is feasible if the threshold $T$ satisfies
\begin{equation}\label{threshold}
    T \geq T^* = \left\{
                   \begin{array}{ll}
                     \frac{1}{\a _1-\a _0}\big[\log\big(\frac{\beta}{\e}\big) - \a_0 D \big], & \hbox{if $D \geq \bar D$,} \\
                     \frac{1}{\a_1}\log\Big(\frac{\e + \beta(1-e^{-\a_1 D}) - 1}{\e - e^{-\a_1 D}}\Big), & \hbox{if $D \leq \bar D$,}
                   \end{array}
                 \right.
\end{equation}
where $\beta = \frac{\a_1}{\a_0 (1-\frac{\a_0}{2})}$ and $\bar D = \frac{1}{\a_1}\log\big(\frac{\beta}{\e}\big)$.

\end{theorem}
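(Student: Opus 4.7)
The plan is to upper-bound the interruption probability $p^{\pi^T}(D)$ by quantities involving $e^{-\alpha_1 D}$ and $e^{-\alpha_1 T}$, and then invert the resulting bound to obtain the threshold condition on $T$. The two cases correspond to whether the initial buffer lies above or at/below the threshold; the formulas in (\ref{threshold}) coincide at $D = \bar D$, so the split is natural.

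My first step is a first-passage analysis. For $D \leq T$, I would apply Lemma~\ref{stop_lemma} with rate $R_1$ and threshold $T$ to express $\pr(\tau_T < \tau_e \mid x_0 = D)$ and $\pr(\tau_e \leq \tau_T \mid x_0 = D)$ in terms of $e^{-\alpha_1 D}$, $e^{-\alpha_1 T}$, and the Laplace transform of the overshoot $x_{\tau_T} - T \in [0,1)$. For $D \geq T$, the buffer is in the $R_0$ regime until it drifts down to $T$; since $\alpha_0$ is the root of $\gamma$ at $R = R_0$, the process $e^{-\alpha_0 x_t}$ is a martingale, and optional stopping yields $\pr(\tau_T < \infty \mid x_0 = D) = e^{-\alpha_0(D - T)}$.

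The key technical estimate is $p(T) \leq \beta\, e^{-\alpha_1 T}$, where $p(T)$ denotes the interruption probability starting from the threshold. After first reaching $T$, the buffer alternates between $R_1$-excursions below $T$ and $R_0$-excursions above $T$; by the strong Markov property, $p(T)$ satisfies a renewal identity whose geometric sum can be written $p(T) = (1-A)/(1 - A V_0)$, where $A$ is the probability that an $R_1$-excursion returns to level $T$ before hitting $0$ (via Lemma~\ref{stop_lemma}) and $V_0 = \E[e^{-\alpha_0 Y}]$ is the expected return probability of the subsequent $R_0$-excursion from overshoot $Y \in [0,1)$. Substituting and simplifying gives
\[
    p(T) = \frac{e^{-\alpha_1 T}(1 - V_1)}{(1 - V_0) + e^{-\alpha_1 T}(V_0 - V_1)}, \qquad V_1 = \E[e^{-\alpha_1 Y}].
\]
Since $V_0 > V_1$, dropping the positive second term in the denominator and applying the pointwise bounds $1 - e^{-\alpha_1 Y} \leq \alpha_1 Y$ and $1 - e^{-\alpha_0 Y} \geq \alpha_0 Y - \tfrac{1}{2}\alpha_0^2 Y^2 \geq \alpha_0 Y(1 - \alpha_0/2)$ (using $Y \leq 1$) makes the unknown moment $\E[Y]$ cancel in the ratio, yielding $(1 - V_1)/(1 - V_0) \leq \alpha_1/[\alpha_0(1 - \alpha_0/2)] = \beta$.

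The final step assembles these bounds. In Case~1 ($D \geq \bar D$), $p^{\pi^T}(D) \leq e^{-\alpha_0(D - T)} \cdot \beta e^{-\alpha_1 T} = \beta\, e^{-\alpha_0 D} e^{-(\alpha_1 - \alpha_0)T}$; setting this $\leq \epsilon$ and solving for $T$ yields the first branch of~(\ref{threshold}). In Case~2 ($D \leq \bar D$), combining the first-passage split with $p(T) \leq \beta e^{-\alpha_1 T}$ gives a bound of the form $[e^{-\alpha_1 D} - e^{-\alpha_1 T} + \beta e^{-\alpha_1 T}(1 - e^{-\alpha_1 D})]/(1 - e^{-\alpha_1 T}) \leq \epsilon$, which inverts algebraically to the second branch. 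The main obstacle is the cycle analysis in the third step: the overshoot distribution at threshold crossings is non-trivial, and the clean constant $\beta$ emerges only because the second-order inequality $1 - e^{-x} \geq x - x^2/2$ together with $Y^2 \leq Y$ lets the unknown factor $\E[Y]$ cancel between the numerator and the denominator of $(1 - V_1)/(1 - V_0)$.
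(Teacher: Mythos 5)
Your proposal is correct and follows essentially the same route as the paper: the same first-passage use of Lemma~\ref{stop_lemma}, the same renewal/fixed-point identity at the threshold (your displayed expression for $p(T)$ is exactly the paper's intermediate formula (\ref{pint_risky})), the same bounds $1-x \le e^{-x} \le 1-x+\tfrac{x^2}{2}$ with the unit overshoot giving $p(T) \le \beta e^{-\alpha_1 T}$, and a case-by-case assembly whose Case-2 bound is algebraically identical to the paper's before inverting to obtain $T^*$. No substantive differences to report.
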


\begin{proof}
Let us first characterize the interruption probability of the policy $\pi^T$ when the initial buffer size is $D = T$. In this case,  by definition of $\pi^T$ the behavior of $x_t$ is initially the same as a single-server system with rate $R_1$ until the threshold $T$ is crossed. Hence, by Lemma \ref{stop_lemma} we have
\begin{eqnarray}
&& p^{\pi^{T}}(T) = \pr\Big(\min_{t \geq 0} x_t \leq 0 \big| x_0 = T\Big)  \nonumber \\
&& = \pr(\tau_e < \tau_{T})\cdot 1 \nonumber \\
&&+ \pr(\tau_{T} < \tau_e)\pr\Big(\min_{t \geq \tau_{T}} x_t \leq 0 \big| \tau_{T} < \tau_e, x_0 = T\Big) \nonumber\\
&&= \frac{e^{-\a_1 T} - \E[e^{-\a_1 x_{\tau_{T}}}|\tau_e > \tau_{T}]}{1-\E[e^{-\a_1 x_{\tau_{T}}}|\tau_e > \tau_{T}]} \nonumber \\
&& + \frac{\big(1-e^{-\a_1 T}\big)\pr\Big(\min_{t \geq \tau_{T}} x_t \leq 0 \big| \tau_{T} < \tau_e, x_0 = T\Big) }{1-\E[e^{-\a_1 x_{\tau_{T}}}|\tau_e > \tau_{T}]}. \nonumber\\
\label{eqn:step1}
\end{eqnarray}

Further,
\begin{eqnarray}\label{pint_cond}
 && \!\!\!\!\!\!\! \pr\Big(\min_{t \geq \tau_{T}} x_t \leq 0 \big| \tau_{T} < \tau_e, x_0 = T\Big) \nonumber \\
&& = \int_{T}^{T + 1} \pr\Big(\min_{t \geq \tau_{T}} x_t \leq 0 \big| x_{\tau_{T}}\Big) d\mu (x_{\tau_{T}}) \nonumber \\
&& \stackrel{(a)}{=} \int_{T}^{T + 1} \pr\Big(\min_{t \geq 0} x_t \leq 0 \big| x_0\Big) d\mu (x_0) \nonumber \\
&& \stackrel{(b)}{=} \int_{T}^{T + 1} \pr\Big(\min_{t \geq 0} x_t \leq 0 \big| \min_{t \geq 0} x_t \leq T, x_0\Big)\nonumber \\
&& \qquad \times \pr\big(\min_{t \geq 0} x_t \leq T| x_0\big) d\mu (x_0) \nonumber \\
&& \stackrel{(c)}{=} \int_{T}^{T + 1} p^{\pi^{T}}(T) e^{-\a_0(x_0 - T)}  d\mu (x_0) \nonumber \\
&& = \E[e^{-\a_0(x_{\tau_{T}} - T)}| \tau_{T} < \tau_e] p^{\pi^{T}}(T),
\end{eqnarray}
where $\mu$ denotes the conditional distribution of $x_{\tau_{T}}$ given $\tau_{T} < \tau_e$. Note that $x_{\tau_{T}} \in [T, T+1]$ because the size of the overshoot is bounded by one. Further, (a) follows from stationarity of the arrival processes and the control policy, (b) holds because a necessary condition for the interruption event is to cross the threshold $T$ when starting from a point $x_0 \geq T$. Finally (c) follows from (\ref{pd_exact}) and the definition of the risky policy. The relations (\ref{eqn:step1}) and (\ref{pint_cond}) together result in
\begin{equation}\label{pint_risky}
    p^{\pi^{T}}(T) = \frac{   e^{-\a_1 T} \big(1-\E_\mu[e^{-\a_1 (x_{\tau_{T}} - T)}]\big) }{ 1- \E_\mu[e^{-\a_0(x_{\tau_{T}} - T)}] + \kappa },
\end{equation}
where $\kappa = \E_\mu[e^{-\a_0 x_{\tau_{T}} -(\a_1 - \a_0)T}]  - \E_\mu[e^{-\a_1 x_{\tau_{T}} }] \geq 0$. Therefore, using the fact that
\begin{equation}\label{fact}
 1- x \leq e^{-x} \leq 1 - x +\frac{x^2}{2}, \quad \foral x \geq 0,
\end{equation}
we can provide the following bound
\begin{eqnarray} \label{pint_riksy_bd}
   p^{\pi^{T}}(T) &\leq & \frac{  e^{-\a_1 T} \big(\a_1 \E_\mu[x_{\tau_{T}} - T]\big)}{\a_0 \E_\mu[x_{\tau_{T}} - T]\Big(1 - \frac{\a_0}{2} \cdot \frac{\E_\mu[(x_{\tau_{T}} - T)^2]}{\E_\mu[x_{\tau_{T}} - T]} \Big)} \nonumber \\
&\leq& \frac{\a_1}{\a_0(1-\frac{\a_0}{2})} e^{-\a_1 T} = \beta e^{-\a_1 T},
\end{eqnarray}
where the last inequality holds since $0 \leq x_{\tau_{\bar D}} - \bar D \leq 1$.

Now we prove feasibility of the risky policy $\pi^{T^*}$ when $D > \bar D$. Observe that by (\ref{threshold}), $D > T^*$, hence the behavior of the buffer size $x_t$ is the same as the one in a single-server system with rate $R_0$ until the threshold $T^*$ is crossed. Thus
\begin{eqnarray*}
    p^{\pi^{T^*}}(D)  &=&   \pr\Big(\min_{t \geq 0} x_t \leq 0 \big| x_0 = D\Big) \\
&=& \pr\Big(\min_{t \geq 0} x_t \leq 0 \big| \min_{t \geq 0} x_t \leq T^*, x_0=D\Big)\\
&& \times \pr\big(\min_{t \geq 0} x_t \leq T^*| x_0=D\big) \\
&=&  p^{\pi^{T^*}}(T^*)  e^{-\a_0(D-T^*)} \\
&\leq&  \beta e^{-(\a_1-\a_0) T^* - \a_0 D} = \e,
\end{eqnarray*}
where the inequality follows from (\ref{pint_riksy_bd}), and the last equality holds by (\ref{threshold}).

Next we verify the feasibility of the policy  $\pi^{T^*}$ for  $D \leq \bar D$. In this case, $D \leq T^*$ and by definition of the risky policy the system behaves as a single-server system with arrival rate $R_1$ until the threshold $T^*$ is crossed or the buffer size hits zero (interruption). Hence, we can bound the interruption probability as follows
\begin{eqnarray*}
&&  p^{\pi^{T^*}}(D)  = \pr(\tau_e < \tau_{T^*}) \cdot 1 \\
&& + \pr(\tau_{T^*} < \tau_e) \pr\Big(\min_{t \geq \tau_{T^*}} x_t \leq 0 \big| \tau_{T^*} < \tau_e, x_0 = D\Big)\\
&& \stackrel{(a)}{=} 1 - \pr(\tau_{T^*} < \tau_e)\Big(1 - \E_\mu[e^{-\a_0(x_{\tau_{T^*}} - T^*)}] p^{\pi^{T^*}}(T^*) \Big)\\
&& \stackrel{(b)}{\leq} \frac{(\beta-1)(1 - e^{-\a_1 D})}{1-\E_\mu[e^{-\a_1 x_{\tau_{T^*}} }]} + 1 - \beta(1 - e^{-\a_1 D})\\
&& \stackrel{(c)}{\leq} \frac{(\beta-1)(1 - e^{-\a_1 D})}{1-e^{-\a_1 T^*} } + 1 - \beta(1 - e^{-\a_1 D}) \stackrel{(d)}{=} \e,
\end{eqnarray*}
where (a) follows from (\ref{pint_cond}), (b) can be verified after some manipulations by combining the result of Lemma \ref{stop_lemma} and (\ref{pint_risky}), and (c) holds since $\beta \geq 1$ and $x_{\tau_{T^*}} \geq T^*$. Finally, (d) immediately follows from plugging in the definition of $T^*$ from (\ref{threshold}).

Therefore, the risky policy $\pi^{T^*}$ is feasible by Definition \ref{feas_policy_def}. Observe that the buffer size under any policy $\pi^T$ of the form (\ref{risky_policy}) with $T\geq T^*$ stochastically dominates that of policy $\pi^{T^*}$, because $\pi^T$ switches to the costly server earlier, and stays in that state longer. Hence, $\pi^T$ is feasible for all $T\geq T^*$.
\end{proof}

Theorem \ref{risky_thm} facilitates the design of risky policies with a single-threshold structure, for any desired initial buffer size $D$ and interruption probability $\e$. For a fixed $\e$, when $D$ increases, $T^*$ (the design given by Theorem \ref{risky_thm}) decreases to zero. On the other hand, if $D$ decreases to $\frac{1}{\a_1}\log\big(\frac1\e\big)$ (the boundary of $\mathcal R$), the threshold $T^*$ quickly increases to infinity, i.e., the policy does not switch back to the free server unless a sufficiently large number of packets is buffered. Figure \ref{T*D_figure} plots $T^*$ and $D$ as a function of $D$ for a fixed $\e$. Observe that for large range of $D$, $T^* \leq D$, i.e., the costly server is not initially used. In this range, owing to the positive drift of $Q_t$, the probability of ever using the costly server exponentially decreases in $(D - T^*)$.

\begin{figure}[htbp]
\centering
\vspace{-0.15in}
  \includegraphics[width=3in]{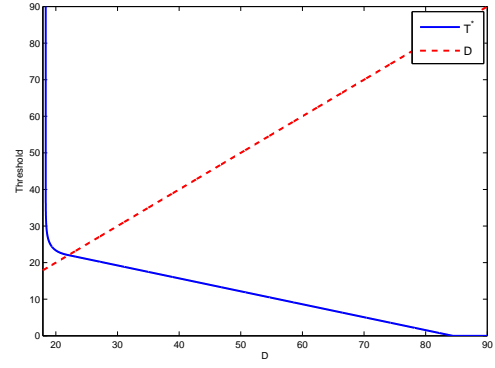}
  \caption{The switching threshold of the online risky policy as a function of the initial buffer size for $\e = 10^{-3}$ (See Theorem \ref{risky_thm}).}\label{T*D_figure}
\vspace{-0.1in}
\end{figure}

Next we compute relatively tight bounds on the expected cost of the online risky policy and compare with the previously proposed policies.

\begin{theorem}\label{risky_cost_thm}
For any $(D, \e) \in \mathcal R$, consider an online risky policy $\pi^{T^*}$ defined in Definition \ref{risky_def}, where the threshold $T^*$ is given by (\ref{threshold}) as function of $D$ and $\e$. If $D \geq \bar D$ then
\begin{equation}\label{risky_cost1}
  J^{\pi^{T^*}}(D,\e) \leq    \frac{\beta}{\a_1(R_1-1)}e^{-a_0(D- T^*)},
\end{equation}
and if $D \leq \bar D$
\begin{equation}\label{risky_cost2}
  J^{\pi^{T^*}}(D,\e) \leq \frac{1 - e^{-\a_1 D}}{(R_1-1)(1 - e^{-\a_1 T^*})}\Big(T^*+1+\frac{\beta}{\a_1}\Big) - \frac{D}{R_1-1},
\end{equation}
where $\beta = \frac{\a_1}{\a_0 (1-\frac{\a_0}{2})}$ and $\bar D = \frac{1}{\a_1}\log\big(\frac{\beta}{\e}\big)$.
\end{theorem}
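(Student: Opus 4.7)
The plan is to reduce both bounds to a single computation of $C := \E[L\mid x_0 = T^*]$, where $L = \int_0^\infty u_t\,dt$ is the total costly-server time. I would first compute $C$, then lift to $D \geq T^*$ (i.e., $D \geq \bar D$) by a first-passage argument, and finally lift to $D \leq T^*$ (i.e., $D \leq \bar D$) by decomposing on the first excursion inside $(0, T^*)$.

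For $C$: starting at $T^*$, the buffer immediately enters $(0, T^*)$, where the dynamics are those of a single-server system with rate $R_1$. Let $\tau = \min(\tau_{T^*}, \tau_e)$, let $p_{\text{ret}} = \pr(\tau_{T^*} < \tau_e)$, and let $\mu$ denote the conditional law of the overshoot $X = x_{\tau_{T^*}} - T^* \in [0,1)$. After a successful return to $T^* + X$ the buffer evolves at rate $R_0$ and, by (\ref{pd_exact}), touches $T^*$ again with conditional probability $e^{-\alpha_0 X}$. The strong Markov property gives the renewal equation $C = \E[\tau] + p_{\text{ret}}\E_\mu[e^{-\alpha_0 X}]\,C$. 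I would compute $\E[\tau]$ via Wald's identity on the compensated process $x_t - T^* - (R_1-1)t$ and $p_{\text{ret}}$ via Lemma \ref{stop_lemma}, both of which share the factor $(1 - e^{-\alpha_1 T^*}\E_\mu[e^{-\alpha_1 X}])^{-1}$; after this factor cancels between numerator and denominator of $C$, and after dropping a negative term in the numerator and the nonnegative $\kappa$ (as in the proof of Theorem \ref{risky_thm}) from the denominator, one is left with $C \leq \E_\mu[X]/[(R_1-1)(1 - \E_\mu[e^{-\alpha_0 X}])]$. Applying the quadratic bound (\ref{fact}) to $e^{-\alpha_0 X}$ together with $\E_\mu[X^2] \leq \E_\mu[X]$ (from $X \leq 1$), precisely as in the derivation of (\ref{pint_riksy_bd}), yields $C \leq 1/[(R_1-1)\alpha_0(1-\alpha_0/2)] = \beta/[\alpha_1(R_1-1)]$.

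For $D \geq \bar D$ we have $T^* \leq D$, and until the buffer first reaches $T^*$ it behaves as a rate-$R_0$ single-server process, touching $T^*$ exactly (no undershoot, since playback is continuous) with probability $e^{-\alpha_0(D-T^*)}$ by (\ref{pd_exact}); the strong Markov property then gives $\E[L\mid x_0=D] = e^{-\alpha_0(D-T^*)}\,C$, which proves (\ref{risky_cost1}). For $D \leq \bar D$ we have $D \leq T^*$, and the analogous decomposition across the first excursion from $D$ gives $\E[L\mid x_0=D] = \E[\tau_0] + p_1\,\E_{\mu_1}[e^{-\alpha_0 X_1}]\,C$, where $\tau_0, p_1, X_1, \mu_1$ are the analogues of $\tau, p_{\text{ret}}, X, \mu$ started from $D$. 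I would use Wald's identity to write $\E[\tau_0] = (p_1(T^* + \E_{\mu_1}[X_1]) - D)/(R_1-1)$, Lemma \ref{stop_lemma} together with $x_{\tau_{T^*}} < T^* + 1$ to bound $p_1 \leq (1 - e^{-\alpha_1 D})/(1 - e^{-\alpha_1 T^*})$, the trivial overshoot bounds $\E_{\mu_1}[X_1] \leq 1$ and $\E_{\mu_1}[e^{-\alpha_0 X_1}] \leq 1$, and the first-step bound $C \leq \beta/[\alpha_1(R_1-1)]$. Because the resulting upper-bound expression is linear in $p_1$ with positive coefficient $T^* + 1 + \beta/\alpha_1$, the monotone replacement of $p_1$ by its upper bound is valid and yields (\ref{risky_cost2}).

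The main obstacle is the simplification for $C$: one must identify the shared factor $(1 - e^{-\alpha_1 T^*}\E_\mu[e^{-\alpha_1 X}])^{-1}$ between $\E[\tau]$ and $(1 - p_{\text{ret}}\E_\mu[e^{-\alpha_0 X}])$ so that it cancels and leaves the residual denominator $1 - \E_\mu[e^{-\alpha_0 X}]$, on which the quadratic inequality (with $\alpha_0$, not $\alpha_1$) used to establish (\ref{pint_riksy_bd}) applies. A naive estimate, such as bounding $\E_\mu[X]/(1 - \E_\mu[e^{-\alpha_1 X}])$ directly, picks up an extraneous factor of $(1 - \alpha_1/2)^{-1}$ and falls short of the claimed constant $\beta/[\alpha_1(R_1-1)]$.
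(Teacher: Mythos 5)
Your proposal is correct and follows essentially the same route as the paper: a first-return/renewal bound at the threshold giving $J^{\pi^{T^*}}(T^*,\epsilon)\le \beta/[\alpha_1(R_1-1)]$ via Wald's identity, the quadratic bound (\ref{fact}) and the unit overshoot, then multiplication by the hitting probability $e^{-\alpha_0(D-T^*)}$ for $D\ge\bar D$, and the optional-stopping-plus-Lemma~\ref{stop_lemma} decomposition for $D\le\bar D$; your computation of $C$ is simply a more explicit (and arguably more careful) derivation of the paper's one-line renewal inequality $J^{\pi^{T}}(T,\epsilon)\le \frac{\E[x_{\tau_{T}}]-T}{R_1-1}+\E\big[e^{-\alpha_0(x_{\tau_{T}}-T)}\big]J^{\pi^{T}}(T,\epsilon)$, arriving at the same ratio $\E_\mu[X]/\big[(R_1-1)(1-\E_\mu[e^{-\alpha_0 X}])\big]$. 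One small fix: the bound $p_1\le (1-e^{-\alpha_1 D})/(1-e^{-\alpha_1 T^*})$ uses the lower overshoot bound $x_{\tau_{T^*}}\ge T^*$ (so that $\E_{\mu_1}[e^{-\alpha_1 x_{\tau_{T^*}}}]\le e^{-\alpha_1 T^*}$), not $x_{\tau_{T^*}}<T^*+1$ as you cite.
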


\begin{proof}
Similarly to the proof of Theorem \ref{risky_thm}, we first consider the risky policy $\pi^T$ with the initial buffer size $T$. By definition of $\pi^{T}$, the costly server is used until the threshold $T$ is crossed. Thus the expected cost of this policy is bounded by the expected time until crossing the threshold plus the expected cost given that the threshold is crossed, i.e.,
\begin{equation*}
   J^{\pi^{T}}(T,\e) \leq \frac{\E[x_{\tau_{T}}] - T}{R_1-1} + E[e^{-\a_0(x_{\tau_{T}} - T)}]    J^{\pi^{T}}(T,\e),
\end{equation*}
where $\tau_{T}$ is defined in (\ref{stop_def}). The above relation implies
\begin{eqnarray}\label{risky_cost_bd}
     J^{\pi^{T}}(T,\e) &\leq&  \frac{1}{R_1-1} \cdot \frac{\E[x_{\tau_{T}} - T] }{1- E[e^{-\a_0(x_{\tau_{T}} - T)}]} \nonumber \\
&\leq& \frac{1}{R_1-1} \cdot \frac{1}{\a_0\Big(1 - \frac{\a_0}{2} \cdot \frac{\E_\mu[(x_{\tau_{T}} - T)^2]}{\E_\mu[x_{\tau_{T}} - T]}\Big)} \nonumber \\
&\leq& \!\! \frac{1}{\a_0(R_1-1)(1-\frac{\a_0}{2})} = \frac{\beta}{\a_1(R_1-1)},
\end{eqnarray}
where the second inequality follows from the fact in (\ref{fact}). Now for any $D\geq \bar D$ we can write
\begin{eqnarray*}
     J^{\pi^{T^*}}(D,\e)   &=&  \pr\Big(\min_{t \geq 0} x_t \leq T^* \big | x_0 = D \Big)      J^{\pi^{T^*}}(T^*,\e) \\
&=& e^{-a_0(D- T^*)} J^{\pi^{T^*}}(T^*,\e)
\end{eqnarray*}
where the inequality holds by Theorem \ref{pd_thm}. Combining this with (\ref{risky_cost_bd}) gives the result in (\ref{risky_cost1}).

If $D\leq \bar D$, the risky policy uses the costly server until the threshold $T^*$ is crossed at $\tau_{T^*}$ or the interruption event ($\tau_e$), whichever happens first. Afterwards, no extra cost is incurred if an interruption has occurred. Otherwise, by (\ref{risky_cost_bd}) an extra cost of at most $\frac{\beta}{\a_1(R_1-1)} $ is incurred, i.e.,
\begin{equation*}
     J^{\pi^{T^*}}(D,\e) \leq \E\big[\min\{\tau_e, \tau_{T^*}\}\big] + \pr(\tau_{T^*} < \tau_e) \frac{\beta}{\a_1(R_1-1)}.
\end{equation*}

By Doob's optional stopping theorem applied to the martingale $Z_t = x_t - (R_1-1)t$, we obtain

$$D = \pr(\tau_{T^*} < \tau_e) \E[x_{\tau_{T^*}}| \tau_{T^*} < \tau_e] -(R_1-1)\E\big[\min\{\tau_e, \tau_{T^*}\}\big], $$
which implies
$$\E\big[\min\{\tau_e, \tau_{T^*}\}\big] \leq \frac{\pr(\tau_{T^*} < \tau_e) (T^*+1) - D}{R_1-1}  $$

By combining the preceding relations we conclude that
$$  J^{\pi^{T^*}}(D,\e) \leq \frac{\pr(\tau_{T^*} < \tau_e)}{R_1-1}\Big(T^*+1+\frac{\beta}{\a_1}\Big) - \frac{D}{R_1-1},$$
which immediately implies (\ref{risky_cost2}) by employing Lemma \ref{stop_lemma}.

\end{proof}

In the following, we compare the expected cost of the presented policies using numerical methods, and illustrate that the bounds  derived in Theorems  \ref{offline_range_thm}, \ref{safe_thm} and \ref{risky_cost_thm} on the expected cost function are close to the exact value.

\subsection{Performance Comparison}

\begin{figure}[htbp]
\centering
\vspace{-0.15in}
  \includegraphics[width=3.5in]{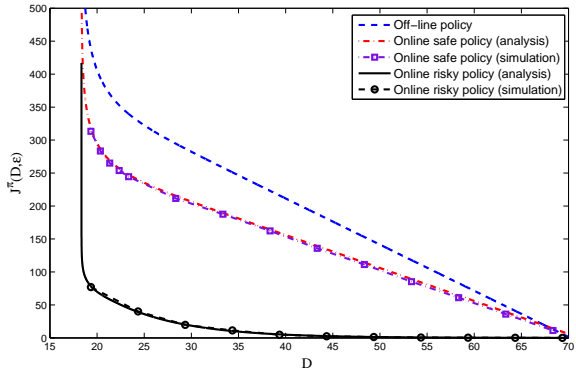}
  \caption{Expected cost (units of time) of the presented control policies as a function of the initial buffer size for interruption probability $\e = 10^{-3}$. The analytical bounds are given by Theorems \ref{offline_range_thm}, \ref{safe_thm} and \ref{risky_cost_thm}.}\label{cost_comparison_fig}
\vspace{-0.1in}
\end{figure}

Figure \ref{cost_comparison_fig} compares the expected cost functions of the off-line, online safe and online risky policies as a function of the initial buffer size $D$, when the interruption probability is fixed to $\e = 10^{-3}$, the arrival rate from the free server is $R_0 = 1.05$, and the arrival rate from the costly server is $R_c = R_1-R_0 = 0.15$. We plot the bounds on the expected cost given by Theorems \ref{offline_range_thm}, \ref{safe_thm} and \ref{risky_cost_thm} as well as the expected cost function numerically computed by the Monte-Carlo method.

Observe that the expected cost of the risky policy is significantly smaller that both online safe and off-line policies. For example, the risky policy allows us to decrease the initial buffer size from 70 to 20 with an average of $70 \times 0.15 \approx 10$ extra packets from the costly server.  The expected cost in terms of the number packets received from the costly server is 43 and 61 for the online safe and off-line policy, respectively.

Moreover, note that it is merely the existence of the costly server as a backup that allows us to improve the user's quality of experience without actually using too many packets from the costly server. For example, observe that the risky policy satisfies QoE metrics of $D = 35$ and $\e = 10^{-3}$, by only using on average about one extra packet from the costly server. However, without the costly server, in order to decrease the initial buffer size from 70 to 35, the interruption probability has to increase from $10^{-3}$ to about $0.03$ (see Theorem \ref{pd_thm}).

\section{Dynamic Programming Approach}
In this section, we present a characterization of the optimal association policy in terms of the Hamilton-Jacobi-Bellman (HJB) equation. Note that because of the probabilistic constraint over the space of sample paths of the buffer size, the optimal policy is not necessarily Markov with respect to the buffer size as the state of the system. We take a similar approach as in \cite{Chen04} where by expanding the state space, a Bellman equation is provided as the optimality condition of an MDP with probabilistic constraint. In particular, consider the pair $(x,p)$ as the state variable, where $x$ denotes the buffer size and $p$ represents the desired level of interruption probability. The evolution of $x$ is governed by the following stochastic differential equation
\begin{equation}\label{x_SDE}
    dx = -dt + dN^u,\quad x_0 = D,
\end{equation}
where $N^u$ is a Poisson counter with rate $R_u = R_0 + u\cdot R_c$. For any $(D,\e) \in \mathcal R$ and any optimal policy $\pi$, the constraint $p^{\pi}(D) \leq \e$ is active. Hence, we consider the sample paths of $p$ such that $p_0 = \e$ and $\E[p_t] = \e$ for all $t$, where the expectation is with respect to the Poisson jumps. Let $\p = p + dp$ if a Poisson jump occurs in an infinitesimal interval of length $dt$. Also, let $dp_0$ be the change in state $p$ is no jump occurs. Therefore,
$$0 = \E[dp] = R_u dt (\p - p) + (1-R_u dt)dp_0.$$
By solving the above equation for $dp_0$, we obtain the evolution of $p$ as
\begin{equation}\label{p_SDE}
    dp = (p - \p) (R_u dt - dN^u), \quad p_0 = \e.
\end{equation}

Similarly to the arguments of Theorem 2 of \cite{Chen04}, by principle of optimality we can write the following dynamic programming equation
\begin{equation}\label{HJB_eq1}
    V(x, p) = \min_{u \in\{0,1\}, \p \in[0,1]} \big\{ u dt +\E[V(x+dx, p+dp)] \big\}.
\end{equation}

If $V$ is continuously differentiable, by It\={o}'s Lemma for jump processes, we have
\begin{eqnarray*}
dV(x,p) &=& \frac{\partial V}{\partial x} (-dt) + \frac{\partial V}{\partial p} \cdot (p - \p)R_u dt \\
&& + \big(V(x+1,\p) - V(x,p)\big)dN^u,
\end{eqnarray*}
which implies the following HJB equation
\begin{eqnarray}\label{HJB}
   \frac{\partial V (x,p)}{\partial x} &=&  \min_{u \in\{0,1\}, \p \in[0,1]} \big\{ u+ \frac{\partial V}{\partial p} \cdot (p - \p)R_u \nonumber \\
&& + R_u \big(V(x+1,\p) - V(x,p)\big)   \big\}
\end{eqnarray}

The optimal policy $\pi$ is obtained by characterizing the optimal solution of the partial differential equation in (\ref{HJB}) together with the boundary condition $V(x,1) = 0$. Since such equations are in general difficult to solve analytically, we use the \emph{guess and check} approach, where we propose a candidate for the value function and verify that it nearly satisfies the HJB equation almost everywhere. Moreover, we show that the trajectories of $(x_t, p_t)$ steered by the optimal actions $(u^*, \p^*)$ lie in a one-dimensional invariant manifold, leading to the risky policy defined in Definition \ref{risky_def}.

For any $(x,p) \in \mathcal R$ define
\begin{equation}\label{T_xp}
    T(x,p) = \left\{
                   \begin{array}{ll}
                     \frac{1}{\a _1-\a _0}\big[\log\big(\frac{\theta}{p}\big) - \a_0 x \big], & \hbox{if $x \geq  \frac{1}{a_1}\log\big(\frac{\theta}{p}\big)$,} \\
                     \frac{1}{\a_1}\log\Big(\frac{p + \theta(1-e^{-\a_1 x}) - 1}{p - e^{-\a_1 x}}\Big), & \hbox{otherwise,}
                   \end{array}
                 \right.
\end{equation}
where $\theta = \frac{\a_1}{\a_0}$. The candidate solution for HJB equation (\ref{HJB}) is given by
\begin{equation}\label{candidate1}
    \bar V(x,p) = \frac{1}{\a_0(1-\frac{\a_0}{2})(R_1-1)}e^{-a_0(x- T(x,p))},
\end{equation}
when $x \geq  \frac{1}{a_1}\log\big(\frac{\theta}{p}\big)$, and
\begin{equation}\label{candidate2}
  \bar V(x,p) = \frac{p + \theta(1-e^{-\a_1 x}) - 1}{(R_1-1)(\theta -1)}\big(T(x,p)+\frac{\beta}{\a_1}\big) - \frac{x}{R_1-1},
\end{equation}
when $x < \frac{1}{a_1}\log\big(\frac{\theta}{p}\big)$. Note that the candidate solution is derived  from the structure if the expected cost of the risky policy (cf. Theorem \ref{risky_cost_thm}). We may verify that $\bar V$ satisfies the HJB equation (\ref{HJB}) for all $(x,p)$ such that $x \geq  \frac{1}{a_1}\log\big(\frac{\theta}{p}\big)$ or $x \geq  \frac{1}{a_1}\log\big(\frac{\theta}{p}\big)-1$, but for other $(x,p)$ the HJB equation is only approximately satisfied. This is due to bounding the overshoots, when computing the expected cost of the risky policy. The verification of HJB equation for our candidate solution is tedious but straightforward. We do not include it owing to space limitation.

\begin{theorem}
Let $\pi^*$ be the optimal association policy obtained from minimizing the right hand side of the HJB equation in (\ref{HJB}) for the value function given by (\ref{candidate1}) and (\ref{candidate2}). Then $\pi^*$ is a risky policy defined in Definition \ref{risky_def} with a threshold level $T(D,\e)$, where $D$ is the initial buffer size and $\e$ is the desired interruption probability.
\end{theorem}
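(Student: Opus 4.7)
The plan is to exploit the fact that the candidate $\bar V(x,p)$ in (\ref{candidate1})--(\ref{candidate2}) depends on $(x,p)$ only through the auxiliary function $T(x,p)$ of (\ref{T_xp}), so level sets of $T$ form an invariant manifold for the optimal closed-loop dynamics. I will show, in order: (a) the inner minimization over $\p$ in (\ref{HJB}) chooses a jump target $\p^*$ that preserves $T$; (b) the between-jumps drift $(\dot x, \dot p) = (-1, (p-\p^*)R_u)$ also preserves $T$, so that $T(x_t, p_t) \equiv T(D,\e)$ along the optimal trajectory starting from $(D,\e)$; (c) the remaining minimization over $u \in \{0,1\}$ reduces to the sign test $u^* = \mathbf{1}\{0 < x < T(D,\e)\}$, which is precisely the risky rule (\ref{risky_policy}) with threshold $T^* = T(D,\e)$.

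For step (a), differentiating the bracketed RHS of (\ref{HJB}) with respect to $\p$ and setting the derivative to zero yields the $u$-independent first-order condition $\partial_p \bar V(x,p) = \partial_p \bar V(x+1, \p^*)$. Substituting the piecewise form of $\bar V$, I would verify that this identity is equivalent to $T(x+1, \p^*) = T(x, p)$, which uniquely determines $\p^*$ as the probability at which the post-jump state $(x+1, \p^*)$ lies on the same level set of $T$ as $(x,p)$. For step (b), differentiating $T(x_t, p_t)$ in time along the drift and using the explicit formula (\ref{T_xp}) in each of its two branches, I would check that the total derivative vanishes; combined with (a) this gives the invariance $T(x_t, p_t) = T(D,\e) =: T^*$ along any admissible trajectory from $(D,\e)$.

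For step (c), once $\p^*$ has been fixed by (a), the HJB minimization in $u$ is linear, $u + (R_0 + uR_c)\, g(x,p)$, where $g(x,p) := \partial_p \bar V \cdot (p-\p^*) + \bar V(x+1,\p^*) - \bar V(x,p)$ is independent of $u$. The minimizer is $u^* = 1$ exactly when $1 + R_c\, g(x,p) \leq 0$, i.e.\ when the marginal reduction in expected cost-to-go along a unit jump exceeds the per-unit-time cost of the costly server. Plugging in $\bar V$ on the manifold $T(x,p) = T^*$, this inequality should collapse to $x < T^*$; the endpoint $x \leq 0$ is excluded because an interruption has already occurred, recovering the boundary $\{0 < x < T^*\}$ of (\ref{risky_policy}).

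The main obstacle is the computational verification of steps (a) and (c) across the piecewise boundary $x = \tfrac{1}{\a_1}\log(\theta/p)$ where the formula for $\bar V$ switches: one must check that the $\p^*$ selected by the first-order condition on one side of the boundary never lands on the other side, or else match conditions at the seam, and verify that the sign of $1 + R_c g$ flips exactly at $x = T^*$ from either branch. A related subtlety, acknowledged in the paper, is that $\bar V$ only approximately satisfies (\ref{HJB}) in the slab $\tfrac{1}{\a_1}\log(\theta/p) - 1 < x < \tfrac{1}{\a_1}\log(\theta/p)$ owing to the overshoot bound used in Theorem \ref{risky_cost_thm}; however, the threshold-rule conclusion of (c) is a robust sign test that continues to yield the risky policy under the $O(1)$ corrections absorbed by that approximation, whose feasibility and cost bound have already been established in Theorems \ref{risky_thm} and \ref{risky_cost_thm}.
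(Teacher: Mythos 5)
Your plan matches the paper's own (sketched) argument: the paper likewise first derives the threshold form of the optimal action, $u^*(x,p)=1$ iff $x < \tfrac{1}{\alpha_1}\log(\theta/p)$, from the HJB minimization, and then shows the optimally steered trajectory from $(D,\epsilon)$ is confined to a one-dimensional invariant manifold $\mathcal M^{(D,\epsilon)}$, which is exactly the level set $T(x,p)=T(D,\epsilon)$ that your steps (a)--(b) preserve, so the rule collapses to $x < T(D,\epsilon)$ as in your step (c). Since the paper itself leaves these verifications as "tedious but straightforward," your proposal is correct and essentially the same approach at a comparable level of detail (your explicit first-order condition for $\hat p^*$ and the linear-in-$u$ sign test are just the worked-out form of the paper's minimization).
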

\begin{proof}
We sketch the proof owing to space limitation. First, we can show that the optimal action $u^*(x,p)$ takes the following form
\begin{equation}\label{u_opt}
    u^*(x,p) = \left\{
                 \begin{array}{ll}
                   0, & \hbox{ if $x \geq \frac{1}{a_1}\log\big(\frac{\theta}{p}\big) $;} \\
                   1, & \hbox{otherwise.}
                 \end{array}
               \right.
\end{equation}

Moreover, we may verify that for the initial condition $(x_0, p_0) = (D,\e)$, the trajectory of $(x_t, p_t)$ steered by the optimal actions $(u^*, \p^*)$ is limited to a one-dimensional invariant manifold $\mathcal M(D,\e)$, where
\begin{eqnarray}
\mathcal M^{(D,\e)}= \Big\{(x,p): p = \theta e^{-\a_0x - (\a_1-\a_0)T}\cdot \mathbb I_{\{x \geq T(D,\e)\}} &&\nonumber \\
 + \frac{(\theta -1)e^{-\a_1 T(D,\e)} - e^{-\a_1 x } (1-\theta e^{-\a_1 T(D,\e)})}{1-e^{-\a_1 T(D,\e)}} \mathbb I_{\{x < T(D,\e)\}} \Big\},  &&\nonumber
\end{eqnarray}
where $T(D,\e)$ is given by (\ref{T_xp}). Therefore, by plugging the above relation back into (\ref{u_opt}), we can show that the optimal action $u^* = 0$ if and only if $x \geq T(D,\e)$, i.e., the optimal policy given by the HJB equation is of the form of the risky policy in Definition \ref{risky_def} with threshold $T = T(D, \e)$.
\end{proof}

\section{Conclusions and Future Work}\label{conclusion_sec}

In this paper we studied the problem of selecting the access-networks in a heterogeneous wireless environment for media streaming applications.  Our objective was to investigate the trade-offs between the network usage cost and the user's QoE requirements parameterized by initial waiting time and allowable probability of interruption in media playback.  We analytically characterized and compared the expected cost of both off-line and online policies, finally showing that a threshold-based onilne risky policy achieves the lowest cost. Moreover, we derived an HJB equation for the problem of finding the optimal deterministic policy formulated as an MDP with a probabilistic constraint, and verified that the the online risky policy nearly satisfies the HJB equation. Numerical analysis also confirmed our analytical results showing that merely the availability of a costly server used as a back-up significantly improves QoE of media streaming without incurring a significant usage cost.

In the future, we would like to study more accurate models of channel variations such as the two-state Markov model due to Gillbert and Elliot. In this work we focused on deterministic network association policies. Another extension of this work would consist of studying randomized control policies. Finally, we would like to study more of the peer-to-peer aspect of the system to understand the decision making at the system level.

\bibliographystyle{unsrt}
\bibliography{ISIT}

%

\end{document}